\newtheorem{theorem}{Theorem}[section]
\newtheorem{thm}[theorem]{Theorem}
\newtheorem{corollary}[theorem]{Corollary}
\newtheorem{problem}{Problem}
\newtheorem{remark}[theorem]{Remark}
\theoremstyle{definition}
\title{\bf Some Criteria for a Signed Graph to Have Full Rank }
\author{\bf {S. Akbari\footnote{Email addresses: s\underline{~}akbari@sharif.edu, ghafaribaghestani\underline{~}a@mehr.sharif.edu, kazemian\underline{~}kimia@mehr.sharif.edu, nahvi\underline{~}mina@mehr.sharif.edu}, A. Ghafari, K. Kazemian, M. Nahvi}\\\\
	\small Department of Mathematical Sciences\\
	 \small Sharif University of Technology, Tehran, Iran\\}
\date{}
\begin{document}
	\maketitle
	\begin{abstract}
		A weighted graph $G^{\omega}$ consists of a simple graph $G$ with a weight $\omega$, which is a mapping,\linebreak$\omega$:  $E(G)\rightarrow\mathbb{Z}\backslash\{0\}$.
		 A signed graph is a graph whose edges are labeled with $-1$ or $1$. In this paper, we characterize graphs which have a sign such that their signed adjacency matrix has full rank, and graphs which have a weight such that their weighted adjacency matrix does not have full rank. We show that for any arbitrary simple graph $G$, there is a sign $\sigma$ so that $G^{\sigma}$ has full rank if and only if $G$ has a $\{1,2\}$-factor. We also show that for a graph $G$, there is a weight $\omega$ so that $G^{\omega}$ does not have full rank if and only if $G$ has at least two $\{1,2\}$-factors.
	\end{abstract}
	{\noindent\it\bf 2010 Mathematics Subject Classification:} 05C22, 05C70, 05C78, 15A03.\\
	{\it\bf Keywords:}  Weighted graph, Signed graph, Weighted adjacency matrix, Signed adjacency matrix, Rank.
	\section{Introduction}
	Throughout this paper, by a graph we mean a simple, undirected and finite graph. Let $G$ be a graph. We denote the edge set and the vertex set of $G$ by $E(G)$ and $V(G)$, respectively. By \textit{order} and \textit{size} of $G$, we mean the number of vertices and the number of edges of $G$, respectively. The adjacency matrix of a simple graph $G$ is denoted by $A(G)=[a_{ij}]$, where $a_{ij}=1$ if $v_{i}$ and $v_{j}$ are adjacent, and $a_{ij}=0$ otherwise. We denote the complete graph of order $n$ by $K_{n}$. A \textit{$\{1,2\}$-factor} of a graph $G$ is a spanning subgraph of $G$ which is a disjoint union of copies of $K_{2}$ and cycles. For a $\{1,2\}$-factor $H$ of $G$, the number of cycles of $H$ is denoted by $c(H)$. The number of $\{1,2\}$-factors of a graph $G$ is denoted by $t(G)$. The \textit{perrank} of a graph $G$ of order $n$ is defined to be the order of its largest subgraph which is a disjoint union of copies of $K_{2}$ and cycles, and we say $G$ has full perrank if $perrank(G)=n$. For a graph $G$, a \textit{zero-sum flow} is an assignment of non-zero real numbers to the edges of $G$ such that the total sum of the assignments of all edges incident with any vertex  is zero. For a positive integer $k$, a \textit{zero-sum $k$-flow} of $G$ is a zero-sum flow of $G$ using the numbers $\{\pm1,\ldots,\pm(k-1)\}$.\\
	We call a matrix \textit{integral} if all of its entries are integers. A set $X$ of $n$ entries of an $n\times n$ matrix $A$ is called a \textit{transversal}, if $X$ contains exactly one entry of each row and each column of $A$. A transversal is called a \textit{non-zero transversal} if all its entries are non-zero. The identity matrix is denoted by $I$. Also, $j_{n}$ is an $n\times1$ matrix with all entries 1.\\
A \textit{weighted graph} $G^{\omega}$ consists of a simple graph $G$ with a weight $\omega$, which is a mapping,\linebreak $\omega$: $ E(G)\rightarrow\mathbb{Z}\backslash\{0\}$. A \textit{signed graph} $G^{\sigma}$ is a weighted graph where $\sigma$: $E(G)\rightarrow\{-1,1\}$. The \textit{weighted adjacency matrix} of the weighted graph $G^{\omega}$ is denoted by $A(G^{\omega})=[a^{\omega}_{ij}]$, where $a^{\omega}_{ij}=\omega(v_{i}v_{j})$ if $v_{i}$ and $v_{j}$ are adjacent vertices, and $a^{\omega}_{ij}=0$, otherwise. The rank of a weighted graph is defined to be the rank of its weighted adjacency matrix. 
A \textit{bidirected} graph $G$ is a graph such that each edge is composed of two directed \textit{half edges}. Function $f:$ $E(G)\rightarrow\mathbb{Z}\backslash\{0\}$ is a \textit{nowhere-zero $\mathbb{Z}$-flow} of $G$ if for every vertex $v$ of $G$ we have $\sum_{e\in E^{+}(v)}{f(e)}=\sum_{e\in E^{-}(v)}{f(e)}$, where $E^{+}(v)$ (resp. $E^{-}(v)$) is the set of all edges with tails (resp. heads) at $v$. For a positive integer $k$, a \textit{nowhere-zero $k$-flow} of $G$ is a nowhere-zero $\mathbb{Z}$-flow of $G$ using the numbers $\{\pm1,\ldots,\pm(k-1)\}$.
For a graph $G$, where $E(G)=\{e_{1},\ldots,e_{m}\}$ and $V(G)=\{v_{1},\ldots,v_{n}\}$, we define $M_{G}(x_{1},\ldots,x_{m})=[m_{ij}]$ to be an $n\times n$ matrix, where $m_{ij}=\begin{cases}
x_{k} & \text{If } e_{k}=v_{i}v_{j}
\\
0 & \text{otherwise}
\end{cases} $. Now, define  $f_{G}(x_{1},\ldots,x_{m})=det(M_{G}(x_{1},\ldots,x_{m}))$.\\
\\
	 In order to establish our results, first we need the following well-known theorem, which has many applications in algebraic combinatorics.\\
	 \\
	 \noindent\textbf{Theorem A.} \cite{[1]} \textit{Let $F$ be an arbitrary field and let $f=f(x_{1},\ldots,x_{n})$ be a polynomial in $F[x_{1},\ldots,x_{n}]$. Suppose the degree $deg(f)$ of f is $\sum\limits_{i=1}^{n}t_{i}$, where each $t_{i}$ is a nonnegative integer, and suppose the coefficient of $\prod_{i=1}^{n} x^{t_{i}}_{i}$ in $f$ is non-zero. Then, if $S_{1},\ldots,S_{n}$ are subsets of $F$ with $|S_{i}|>t_{i}$, there are $s_{1}\in S_{1},s_{2}\in S_{2},\ldots,s_{n}\in S_{n}$ so that $f(s_{1},\ldots,s_{n})\neq0$.}\\
	 \\
	 The following remark is a necessary tool in proving our results.\\
	 \begin{remark}
	 	\label{remark1}
	 	For a graph $G$, each non-zero transversal in $A(G)$ corresponds to a $\{1,2\}$-factor of $G$, and each $\{1,2\}$-factor $H$ of $G$ corresponds to $2^{c(H)}$ non-zero transversals in $M$.
	 \end{remark}
	 In this paper, we prove the following theorems: 
	  \vspace{0.2cm}
	 \\
	 \noindent\textbf{Theorem.}\textit{
	 	Let $G$ be a graph. Then there exists a sign $\sigma$ for $G$ so that $G^{\sigma}$ has full rank if and only if $G$ has full perrank.}\\
	 \\
	\noindent\textbf{Theorem.}\textit{
	 			Let $G$ be a graph. Then there exists a weight $\omega$ for $G$ so that $G^{\omega}$ does not have full rank if and only if $t(G)\geq2$.}\\
		\section{Signed Graphs Which Have Full Rank}
		Our first result is the following theorem.\\
		\begin{thm}
			Let $G$ be a graph. Then there exists a sign $\sigma$ for $G$ so that $G^{\sigma}$ has full rank if and only if $G$ has full perrank.
			\end{thm}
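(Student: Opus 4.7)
The plan is to prove both directions separately; the forward direction follows at once from the Leibniz formula combined with Remark~\ref{remark1}, while the reverse direction requires an analysis of the determinantal polynomial.

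For the forward direction, if $G^\sigma$ has full rank then $\det A(G^\sigma)\ne 0$, so some term $\prod_i a^\sigma_{i,\pi(i)}$ of the Leibniz expansion is non-zero. This is a non-zero transversal of $A(G^\sigma)$, and by Remark~\ref{remark1} it produces a $\{1,2\}$-factor of $G$, so $G$ has full perrank.

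For the reverse direction I would begin with $f_G(x_1,\ldots,x_m)=\det M_G(x_1,\ldots,x_m)$. Using Remark~\ref{remark1} together with a standard permutation-sign calculation, I would expand
\[ f_G(x)=\sum_H (-1)^{n-|M(H)|-c(H)}\, 2^{c(H)} \prod_{e\in M(H)} x_e^{2} \prod_{e\in C(H)} x_e, \]
summed over the $\{1,2\}$-factors $H$ of $G$, where $M(H)$ and $C(H)$ denote the $K_2$-edges and cycle-edges of $H$, and $(-1)^{n-|M(H)|-c(H)}$ is the common sign of the $2^{c(H)}$ transversals attached to $H$ (both orientations of each cycle give equal-sign $\ell$-cycles, so there is no cancellation inside an $H$). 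Specializing $x_e=\sigma_e\in\{-1,1\}$ kills the squares, so $\det A(G^\sigma)$ becomes multilinear in $\sigma$. Next I would group the sum by the cycle-edge set $C=C(H)\subseteq E(G)$: since $c(H)=c(C)$ and $|M(H)|=(n-|V(C)|)/2$ depend only on $C$, all $H$ with $C(H)=C$ carry the same sign, and the coefficient of $\prod_{e\in C}\sigma_e$ in $\det A(G^\sigma)$ equals $\pm\,2^{c(C)}\,N(C)$, where $N(C)$ is the number of perfect matchings of $G[V(G)\setminus V(C)]$. Fixing a $\{1,2\}$-factor $H_0$ guaranteed by full perrank and taking $C_0=C(H_0)$ forces $N(C_0)\ge 1$, so this coefficient is non-zero. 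The reduced polynomial is therefore a non-zero multilinear polynomial in $\sigma_1,\ldots,\sigma_m$, and I would apply Theorem~A with $t_e=1$ on a maximum cycle-edge set $C^\ast$ of non-zero coefficient and $t_e=0$ elsewhere, taking $S_e=\{-1,1\}$ on $C^\ast$ and $S_e=\{1\}$ off it; this produces a $\sigma\in\{-1,1\}^m$ with $\det A(G^\sigma)\ne 0$.

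The main obstacle I anticipate is signed cancellation between distinct $\{1,2\}$-factors when we specialize to $\sigma\in\{-1,1\}^m$, since reducing $\sigma_e^2=1$ collapses many different $H$'s onto the same squarefree monomial. The grouping by cycle-edge set resolves this because within one group the sign $(-1)^{n-|M(H)|-c(H)}$ is forced to be constant, so the group's contribution is $\pm\,2^{c(C)}\,N(C)\,\prod_{e\in C}\sigma_e$ and cannot vanish for the distinguished set $C_0$ inherited from $H_0$.
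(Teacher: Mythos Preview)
Your proof is correct and follows essentially the same strategy as the paper: reduce $f_G$ modulo the relations $x_e^2=1$ to obtain a multilinear polynomial in the signs, show this polynomial is not identically zero, and then apply Theorem~A (Combinatorial Nullstellensatz) with $S_e=\{-1,1\}$. The only difference is in how non-vanishing is established. The paper picks a $\{1,2\}$-factor $F_1$ with the \emph{maximum} number of $K_2$-components, which forces all its cycles to be odd and makes the sign of every contributing transversal equal to $(-1)^{a(F_1)}$; it then argues that any other factor producing the same squarefree monomial must have the same cycle-edge set and hence the same sign. You instead compute the coefficient of every squarefree monomial directly as $\pm\,2^{c(C)}N(C)$ by grouping over the cycle-edge set $C$, and observe that \emph{any} $\{1,2\}$-factor $H_0$ already yields $N(C_0)\ge 1$. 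Your route is slightly more explicit and avoids the extremal choice, but the two arguments are the same in substance.
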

		\begin{proof}
			First, assume that $G$ has full perrank. Let $m=|E(G)|$, $n=|V(G)|$. Suppose that $\overline{f}(x_{1},\ldots,x_{m})$ is the polynomial obtained by replacing $x^{2}_{i}$ by $1$ in $f_{G}(x_{1},\ldots,x_{m})$, for all $i$, $1\leq i\leq m$. Clearly, $deg_{x_{i}}\overline{f}\leq 1$ for all $i$, $1\leq i\leq m$.\\
			Let $U$ be the set of all $\{1,2\}$-factors of $G$. We have $U\neq\varnothing$. For each $H\in U$, we define $a(H)$ as the number of $K_{2}$ components of $H$. We choose $F_{1}$ in $U$ so that $a(F_{1})=\max\limits_{H\in U}(a(H))$. Note that $F_{1}$ does not contain any even cycles. So $F_{1}$ is a disjoint union of $a(F_{1})$ copies of $K_{2}$, and $c(F_{1})$ odd cycles. Assume that all edges in the cycles of $F_{1}$ are $e_{1},\ldots,e_{k}$. There are $2^{c(F_{1})}$ non-zero transversals in $M$ corresponding to $F_{1}$, and all terms in $\overline{f}(x_{1},\ldots,x_{m})$ created by these non-zero transversals are $(-1)^{a(F_{1})}x_{1}\ldots x_{k}$.\\
			Let $X$ be a non-zero transversal in $M$ which makes the term $ax_{1} \ldots x_{k}$ in $\overline{f}(x_{1},\ldots,x_{m})$ for some $a\in \mathbb{R}$ and does not correspond to $F_{1}$. Let $F_{2}$ be the $\{1,2\}$-factor of $G$ associated with $X$. The edges in the cycles of $F_{2}$ are $e_{1},\ldots,e_{k}$. Therefore $a(F_{2})=a(F_{1})$, which is maximum. So $F_{2}$ has no even cycles and $a=(-1)^{a(F_{1})}$. So $x_{1} \ldots x_{k}$ has a non-zero coefficient in $\overline{f}(x_{1},\ldots,x_{m})$. Thus $\overline{f}\not\equiv0$ and it has a term $c\prod_{i=1}^{m} x^{t_{i}}_{i}$ with the maximum degree $\sum\limits_{i=1}^{m}t_{i}$, where $t_{i}\in \{0,1\}$ for each $i$, $1\leq i\leq m$. Let $S_{i}=\{-1,1\}$ for each $i$, $1\leq i\leq m$, so $|S_{i}|\geq2$. By Theorem A there exists $(s_{1},\ldots,s_{m})\in S_{1}\times \cdots \times S_{m}$ so that $\overline{f}(s_{1},\ldots,s_{m})\neq0$. By defining $\sigma$ as a sign assigning the same sign as $s_{i}$ to $e_{i}$ for each $i$, $1\leq i\leq m$, one can see that $det(A(G^{\sigma}))\neq0$.\\
			
			Now, assume that there exists such a sign for $G$. It is clear that $A(G^{\sigma})$ has a non-zero transversal, and the associated edges with this transversal, regardless of their signs, form a $\{1,2\}$-factor of $G$.
		\end{proof}
		\begin{corollary}
				Let $G$ be a graph. Then $\max\limits_{\sigma}(rank(G^{\sigma}))=perrank(G)$.
		\end{corollary}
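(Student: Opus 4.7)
The plan is to deduce the corollary from Theorem~2.1 applied to induced subgraphs of $G$, combined with the standard fact that for a real symmetric matrix $A$, the rank of $A$ equals the maximum order of a non-singular principal submatrix of $A$. I would prove the two inequalities separately.

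For $\max\limits_{\sigma}rank(G^{\sigma})\geq perrank(G)$: set $k=perrank(G)$ and fix a subgraph $H$ of $G$ of order $k$ that is a disjoint union of copies of $K_{2}$ and cycles. Then $H$ is a $\{1,2\}$-factor of the induced subgraph $G[V(H)]$, so $G[V(H)]$ has full perrank. By Theorem~2.1, there is a sign $\sigma_{0}$ on $G[V(H)]$ such that $A(G[V(H)]^{\sigma_{0}})$ is non-singular. Extend $\sigma_{0}$ to any sign $\sigma$ of $G$; then $A(G[V(H)]^{\sigma_{0}})$ appears as a principal $k\times k$ submatrix of $A(G^{\sigma})$, giving $rank(A(G^{\sigma}))\geq k$.

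For the reverse inequality, fix any sign $\sigma$ and set $r=rank(A(G^{\sigma}))$. The symmetric-matrix fact yields a set $S\subseteq V(G)$ with $|S|=r$ such that the principal submatrix $A(G^{\sigma})[S]=A(G[S]^{\sigma|_{G[S]}})$ is non-singular. By the easy direction of Theorem~2.1 (namely, Remark~1 applied to any non-zero term in the determinant expansion of this submatrix), $G[S]$ has a $\{1,2\}$-factor; this factor is a subgraph of $G$ of order $r$ that is a disjoint union of copies of $K_{2}$ and cycles, so $perrank(G)\geq r$.

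The only non-combinatorial ingredient is the symmetric-matrix lemma, which is standard and admits a short proof: write $A=BCB^{T}$, where the columns of $B$ are a basis of the column space of $A$; then $B$ has full column rank $r$, and $C$ is forced to be an $r\times r$ symmetric non-singular matrix (symmetry follows from $B(C-C^{T})B^{T}=0$ together with the injectivity of $B^{T}B$). Choosing a row set $S$ with $|S|=r$ so that $B_{S}$ is an $r\times r$ non-singular block makes $A[S]=B_{S}CB_{S}^{T}$ non-singular. No substantial obstacle is anticipated.
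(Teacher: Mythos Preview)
Your proof is correct. The paper states the corollary without proof, treating it as immediate from Theorem~2.1; your argument is precisely the natural way to make this deduction explicit, applying Theorem~2.1 to induced subgraphs and invoking the standard fact that a real symmetric matrix of rank $r$ has a non-singular $r\times r$ principal submatrix.
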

		 In the sequel we propose two following problems.
		\begin{problem}
			Let $G$ be a graph. Determine $\min\limits_{\sigma}(rank(G^{\sigma}))$.
		\end{problem}
		\begin{problem}
			Find an efficient algorithm that can lead us to the desirable sign in Theorem $2.1$.
		\end{problem}
		\section{Weighted Graphs Which Do Not Have Full Rank}
		
		In order to establish our next result, first we need the following lemma and theorems.\\
		
		\vspace{0.3 cm}
		\noindent\textbf{Lemma A.} \cite{[7]} \textit{Let $G$ be a $2$-edge connected bipartite graph. Then $G$ has a zero-sum $6$-flow.}\\
		
		\vspace{0.3 cm}
		\noindent\textbf{Theorem B.} \cite{[7]} \textit{Suppose $G$ is not a bipartite graph. Then $G$ has a zero-sum flow if and only if for any edge $e$ of $G$, $G\backslash \{e\}$ has no bipartite component.}\\
			
			\vspace{0.3 cm}
			\noindent\textbf{Theorem C.} \cite{[8]} \textit{Every bidirected graph with a nowhere-zero $\mathbb{Z}$-flow has a nowhere-zero $12$-flow.}\\
				
				\vspace{0.3 cm}
					According to \cite{[2]}, if we orient all edges of a simple graph in a way that all edges adjacent to each vertex $v$ belong to $E^{+}(v)$, then a nowhere-zero bidirected flow corresponds to a zero-sum flow. Therefore, the following corollary is a result of Theorem C.\\
					
					\vspace{0.3 cm}
					\noindent\textbf{Corollary A.} \textit{Every graph with a zero-sum flow has a zero-sum $12$-flow.}\\
					\\
		Now, we can prove the following theorem.\\
		\begin{thm}
			Let $G$ be a graph. Then there exists a weight $\omega$ for $G$ so that $G^{\omega}$ does not have full rank if and only if $t(G)\geq2$.
		\end{thm}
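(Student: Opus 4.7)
The polynomial $f_{G}$ introduced earlier satisfies $\det A(G^{\omega}) = f_{G}(\omega(e_{1}), \ldots, \omega(e_{m}))$, so the theorem reduces to asking when $f_{G}$ has a zero on $(\mathbb{Z}\setminus\{0\})^{m}$. My plan is to handle the two directions separately: the easy one follows directly from Remark~\ref{remark1}, while the hard one leans on the zero-sum-flow machinery (Lemma~A, Theorem~B, Corollary~A).

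For the easy direction, I tacitly assume that $G$ has at least one $\{1,2\}$-factor (otherwise $f_{G}\equiv 0$ and the statement is degenerate) and consider $t(G)=1$ with unique $\{1,2\}$-factor $H$. By Remark~\ref{remark1}, all $2^{c(H)}$ non-zero transversals come from $H$; a short check confirms that the two orientations of each cycle-component of $H$ give permutations of equal sign, so every such transversal contributes the same signed monomial $\pm\,\prod_{e\in K_{2}(H)}x_{e}^{2}\prod_{e\in\text{cycles}(H)}x_{e}$. Hence $f_{G}$ is a nonzero scalar multiple of this single monomial, which cannot vanish at any point with all coordinates nonzero, so every weighting yields full rank.

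For the hard direction, assume $t(G)\ge 2$. The first case to dispatch is the one in which $G$ admits a zero-sum flow: by Corollary~A there is then an integer-valued zero-sum $12$-flow $\omega$, and $A(G^{\omega})j_{n}=0$ immediately gives $\det A(G^{\omega})=0$. This covers, in particular, all $2$-edge-connected bipartite graphs via Lemma~A and all non-bipartite graphs satisfying the hypothesis of Theorem~B.

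The main obstacle is the residual case where $G$ has $t(G)\ge 2$ but no zero-sum flow. By Theorem~B, in the non-bipartite case this produces an edge $e$ such that $G\setminus\{e\}$ has a bipartite component, and a parallel pathology occurs for bipartite $G$ lacking $2$-edge-connectedness. My strategy is to delete such obstructing edges to obtain a spanning subgraph $G'\subseteq G$ that does admit a zero-sum flow $\omega'$, and then find a vector $v\in\ker A({G'}^{\omega'})$ vanishing on every endpoint of an edge in $E(G)\setminus E(G')$; extending $\omega'$ by arbitrary nonzero integer weights on the remaining edges then keeps $v$ in the kernel of $A(G^{\omega})$. The hardest step, I expect, is arguing that the hypothesis $t(G)\ge 2$ always supplies enough kernel dimension in $A({G'}^{\omega'})$ to meet the vanishing constraints imposed by the deleted edges, for which I would analyze how two distinct $\{1,2\}$-factors of $G$ restrict to $G'$ and interact with the structure of $E(G)\setminus E(G')$.
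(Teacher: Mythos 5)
Your reduction to $f_{G}$ and your treatment of the easy direction ($t(G)\le 1$) are fine and match the paper, as does your first observation in the hard direction that a zero-sum flow $\omega$ gives $A(G^{\omega})j_{n}=0$. But the residual case --- $t(G)\ge 2$ and no zero-sum flow --- is the heart of the theorem, and there your argument has a genuine gap, both because it is only a plan and because the mechanism you propose cannot work as described. The only kernel vector that the zero-sum-flow machinery hands you for $A({G'}^{\omega'})$ is $j_{n}$ itself (the statement $M_{G'}(a_{1},\ldots,a_{m'})j_{n}=0$ \emph{is} the zero-sum condition), and $j_{n}$ vanishes at no vertex. So your requirement of a kernel vector vanishing on every endpoint of a deleted edge asks for strictly more kernel than the flow provides, and nothing in Lemma A, Theorem B, or Corollary A supplies it. Deleting the obstructing edges also tends to disconnect $G$ (the obstruction in the bipartite case is precisely a cut edge), after which each component would need its own flow, and you would still face the vanishing constraints at the cut vertices.

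The paper avoids constructing anything in this case: it runs a minimal-counterexample argument. Taking $G$ of minimum order and then minimum size among graphs with $t(G)\ge 2$ on which every weighting is nonsingular, it uses the decomposition $f_{G}=x_{i}^{2}g_{G_{i}}+x_{i}h_{G_{i}}+l_{G_{i}}$ together with Theorem A and minimality to force strong structural facts: every edge $e_{i}$ lies in a $K_{2}$-component of some $\{1,2\}$-factor \emph{and} is omitted by some other $\{1,2\}$-factor. These two facts are then played against the bipartite component $(X,Y)$ produced by Theorem B (non-bipartite case) or by the cut edge guaranteed by Lemma A (bipartite case): the first forces $|X|=|Y|+1$ (or $|X|-2=|Y|$), the second forces $|X|=|Y|$, a contradiction. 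In other words, the ``no zero-sum flow'' case is shown to be vacuous for a minimal counterexample, rather than being handled by an explicit singular weighting. Note also that the structural facts above are \emph{not} automatic for an arbitrary graph with $t(G)\ge 2$; they come from minimality, so you cannot hope to derive the parity clash directly for your $G$ without some analogue of that reduction. To repair your proof you would need either to adopt this minimal-counterexample scheme or to find a genuinely new construction for the no-flow case; as written, the proposal does not establish the hard direction.
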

		\begin{proof}
			Define $X=\{H|$ $t(H)\geq2$ and for any $\omega,$ $rank(A(H^{\omega}))=|V(H)|\}$. By contradiction assume that $X\not=\varnothing$. Let $n=\min\limits_{H\in X}{|V(H)|}$ and $m=\min\limits_{H\in X,|V(H)|=n}{|E(H)|}$ and $G\in X$ be a graph of order $n$ and size $m$. Let $E(G)=\{e_{1},\ldots,e_{m}\}$. Obviously, $G$ is connected. For all $a_{1},\ldots,a_{m}\in \mathbb{Z}\backslash \{0\}$, we have $f_{G}(a_{1},\ldots,a_{m})\not=0$. For each $i$, $1\leq i\leq m$, we can write $f_{G}=x_{i}^{2}g_{G_{i}}+x_{i}h_{G_{i}}+l_{G_{i}}$, where $g_{G_{i}}$, $h_{G_{i}}$ and $l_{G_{i}}$ are polynomials in variables $x_{1},\ldots,x_{i-1},x_{i+1},\ldots,x_{m}$. One can see that $g_{G_{i}}$ is the zero polynomial if and only if $e_{i}$ belongs to no $K_{2}$ component of any $\{1,2\}$-factor of $G$, $h_{G_{i}}$ is the zero polynomial if and only if $e_{i}$ belongs to no cycle of any $\{1,2\}$-factor of $G$, and $l_{G_{i}}$ is the zero polynomial if and only if $e_{i}$ belongs to all $\{1,2\}$-factors of $G$.\\
			If there exists an edge $e_{i}$ of $G$ belonging to no $\{1,2\}$-factor of $G,$ then $t(G\backslash\{e_{i}\})\geq2$ and for each weight $\omega$ we have $det(A((G\backslash\{e_{i}\})^{\omega}))\not=0$, which is a contradiction. So each edge of $G$ is contained in at least one $\{1,2\}$-factor. Moreover, if there exists an edge $e_{i}=uv$ which belongs to no cycle of any $\{1,2\}$-factor of $G$ but belongs to all $\{1,2\}$-factors of $G$, then we have $t(G\backslash\{u,v\})\geq2$ and for each weight $\omega$, $det(A((G\backslash\{u,v\})^{\omega}))\not=0$, a contradiction. Furthermore we show that if $e_{i}$ appears in a cycle of a $\{1,2\}$-factor, then neither of the polynomials $g_{G_{i}}$ and $l_{G_{i}}$ is the zero polynomial. By contradiction assume that $g_{G_{i}}l_{G_{i}}\equiv0$. If $g_{G_{i}}\equiv0$ and $l_{G_{i}}\not\equiv0$, then according to Theorem A there are $a_{1},\ldots,a_{i-1},a_{i+1},\ldots,a_{m}\in \mathbb{Z}\backslash\{0\}$ so that $h_{G_{i}}l_{G_{i}}(a_{1},\ldots,a_{i-1},a_{i+1},\ldots,a_{m})\not=0$. It can be seen that $f_{G}(a_{1},\ldots,a_{m})=0$, where $a_{i}=-\dfrac{l_{G_{i}}}{h_{G_{i}}}(a_{1},\ldots,a_{i-1},a_{i+1},\ldots,a_{m})$. Note that for each $j$, $1\leq j\leq m$, $a_{j}$ is a non-zero rational number. Since $f_{G}$ is a homogeneous polynimial, it has a root in $(\mathbb{Z}\backslash\{0\})^{m}$, a contradiction. If $l_{G_{i}}\equiv0$ and $g_{G_{i}}\not\equiv0$, then the same argument leads to a  contradiction. If $g_{G_{i}}=l_{G_{i}}\equiv0$ and 
			$e_i,e_{j_1},\ldots,e_{j_p}$ are all edges of a cycle $C$ of a $\{1,2\}$-factor of $G$, then for each $k$, $1\leq k \leq p$, we have $h_{G_i}=x_{j_k} p_k +q_k$, where $p_k$ and $q_k$ are polynomials in variables $\{x_j\}_{j \in I}$, where $I=\{1,\ldots,n\} \backslash \{i,j_{k}\}$. Obviously, $p_k \not\equiv 0$ . If $q_k \not\equiv 0$, then using Theorem A one can see that $h_{G_i}$ has a root in $(\mathbb{Z}\backslash \{0\})^{m-1}$, a contradiction. Therefore, one can see that $h_{G_{i}}=x_{j_{1}}\cdots x_{j_{p}}h_{1}$, where $h_{1}$ is a polynomial in variables $\{x_{j}\}_{j\in J}$, where $J=\{1,\ldots,m\}\backslash\{i,j_{1},\ldots,j_{p}\}$. So $C$ is a subgraph of every $\{1,2\}$-factor of $G$. Now, by considering the graph $G\backslash V(C)$ and noting that $t(G\backslash V(C))\geq2$, we obtain a contradiction. So we have $g_{G_{i}},l_{G_{i}}\not\equiv0$ for all $i$, $1\leq i\leq m$. So, for each edge $e_i$ of $G$, there exists a $\{1,2\}$-factor of $G$ not containing $e_i$, and also there exists a $\{1,2\}$-factor of $G$ containing $e_i$ in a $K_2$ component. \\
			If $G$ has a zero-sum flow, then $M_{G}(a_{1},\ldots,a_{m})j_{n}=0$, and as a result $f_{G}(a_{1},\ldots,a_{m})=0$, where for each $i$, $1\leq i\leq m$, $a_{i}$ is the non-zero integer assigned to the edge $e_{i}$ in the flow, a contradiction. Thus, assume that $G$ has no zero-sum flow. Now, we have two cases:
			\begin{enumerate}
					\item The graph $G$ is not bipartite. According to Theorem B, $G\backslash\{e_{i}\}$ has a bipartite component for some $i$, $1\leq i\leq m$. We have two cases:
					\begin{enumerate}
						\item The edge $e_{i}$ is not a cut edge. The graph $G\backslash\{e_{i}\}$ is a bipartite graph, say $G\backslash\{e_{i}\}=(X,Y)$, where the vertices adjacent to $e_{i}$ belong to $X$. There is a $\{1,2\}$-factor of $G$ having $e_{i}$ in a $K_{2}$ component, so $|X|-2=|Y|$. Also, there exists a $\{1,2\}$-factor of $G$ not having $e_{i}$, therefore we have $|X|=|Y|$, a contradiction.
						\item\label{item1} Now, assume that $e_{i}$ is a cut edge. The graph $G\backslash\{e_{i}\}$ has two components $H$ and $F$, where $F=(X,Y)$ is bipartite.
					\end{enumerate}
				\item\label{item2} Now, suppose that $G$ is bipartite. According to Lemma A, $G$ has a cut edge $e_{i}$. We denote the bipartite connected components of the graph $G\backslash\{e_{i}\}$ by $H$ and $F=(X,Y)$.
			\end{enumerate}
			In both Cases \ref{item1} and \ref{item2}, there exists a $\{1,2\}$-factor of $G$ having $e_{i}$ in a $K_{2}$ component. Therefore, $F\backslash u$ has a $\{1,2\}$-factor, where $u\in X$ is the vertex in $F$ adjacent to $e_{i}$, so we have $|X|=|Y|+1$. On the other hand, there exists a $\{1,2\}$-factor of $G$ which does not contain $e_{i}$. Hence, $F$ has a $\{1,2\}$-factor. So we 
			have $|X|=|Y|$, a contradiction.\\
			
			Now, let $G^{w}$ be a weighted graph which has full rank. Let $E(G)=\{e_1,\ldots,e_m\}$. By contradiction assume that $t(G) < 2$. Then $f_{G}$ has one monomial and therefore for some $i$, $1\leq i \leq m$, $w(e_i)=0$, a contradiction. 
		\end{proof}
		\begin{remark}
			Let $G$ be a graph with $t(G)\geq2$. According to Lemma A, if $G$ is bipartite, then there exists a weight $\omega$: $E(G)\rightarrow\{\pm1,\ldots,\pm5\}$ such that $G^{\omega}$ does not have full rank. If $G$ is not bipartite, then according to Corollary A, there exists a weight $\omega$: $E(G)\rightarrow\{\pm1,\ldots,\pm11\}$ such that $G^{\omega}$ does not have full rank.
	    \end{remark}
	    \noindent{\bf Acknowledgement.} The authors are deeply grateful to Mohammad Javad Moghadamzadeh for his fruitful comments in the preparation of this paper.
			

\begin{thebibliography}{99}
				\bibitem{[2]} S. Akbari, A. Daemi, O. Hatami, A. Javanmard, A. Mehrabian, Zero-sum flows in regular graphs, Graphs and Combinatorics 26 (5) (2010) 603-615.
					\bibitem{[7]} S. Akbari, S. Ghareghani, G.B. Khosrovshahi, A Mahmoody, On zero-sum 6-flows of graphs, Linear Algebra and its Applications 430 (11-12) (2009) 3047-3052.
				\bibitem{[1]} N. Alon, Combinatorial nullstellensatz, Combinatorics, Probability and Computing 8(1-2) (1999) 7-29.
					\bibitem {[8]} M. DeVos, Flows on bidirected graphs, preprint, 2013. Available at arXiv:1310.8406 [math.CO]
			\end{thebibliography}
	\end{document}